\numberwithin{equation}{section}
\def\phi{\varphi}
\def\R{\mathbb R}
\def\N{\mathbb N}
\newcommand{\cm}{\mathcal{M}}
\newcommand{\bfM}{\mathbf{M}}
\newcommand{\bfH}{\mathbf{H}}
\newcommand{\eps}{\varepsilon}
\newcommand{\ra}{\rightarrow}
\newcommand{\be}{\begin{equation}}
\newcommand{\ee}{\end{equation}}
\newcommand{\res}{\ensuremath{\,\textsf{\small \upshape L}\,}}
\newcommand{\spt}{{\rm spt}\,}
\newtheorem{theorem}{Theorem}[section]
\newtheorem{proposition}[theorem]{Proposition}
\theoremstyle{definition}
\theoremstyle{remark}
\newtheorem{remark}[theorem]{Remark}
\numberwithin{equation}{section}
\def\uclhome{@ucl.ac.uk}
\begin{document}

%\title[A remark on the dynamical stability]{A remark on the dynamical stability of strongly stable minimal submanifolds}
\title{Consequences of strong stability of minimal submanifolds}
% Information for first author
\author{Jason D. Lotay}
\author{Felix Schulze}
%\thanks{}
\address{
  Department of Mathematics, University College London, 25 Gordon St,
  London WC1E 6BT, UK}
% Current address
%\curraddr{}
\email{j.lotay\uclhome, f.schulze\uclhome}

% Information for first author

%\thanks{}
%\address{Felix Schulze: 
%  Department of Mathematics, University College London, 25 Gordon St,
 % London WC1E 6BT, UK}
% Current address
%\curraddr{}
%\email{f.schulze\uclhome}

% General info
\subjclass[2000]{}

\dedicatory{}

\keywords{}

\begin{abstract} In this note we show that the recent dynamical stability result for small $C^1$-perturbations of strongly stable minimal submanifolds of C.-J.~Tsai and M.-T.~Wang \cite{TsaiWang17} directly extends to the enhanced Brakke flows of Ilmanen \cite{Ilmanen}.  We illustrate applications of this result, including a local uniqueness statement for strongly stable minimal  submanifolds amongst stationary varifolds, and a mechanism to flow through some singularities of Lagrangian mean curvature flow which are proved to occur by Neves \cite{NevesFTS}.
\end{abstract}

\maketitle

\section{Introduction}

Recently, Tsai and Wang \cite{TsaiWang17} considered $n$-dimensional minimal submanifolds $\Sigma\subset M$ where $(M,g)$ is an $(n+m)$-dimensional ambient Riemannian manifold. They
consider the partial Ricci operator on the normal bundle $N\Sigma$:
$$\mathcal{R}(V) = \text{tr}_{\Sigma}(R(\cdot,V)\cdot)^\perp,$$
where $R$ is the Riemann curvature tensor of $(M,g)$. They call $\Sigma$ \emph{strongly stable}  if $\mathcal{R} - \mathcal{A}$ is a (pointwise) positive operator on $N\Sigma$, where $\mathcal{A}$ is a quadratic expression in the second fundamental form of $\Sigma$ in $(M,g)$.  In coordinates this condition is equivalent to asking that there exists a constant $c_0>0$ such that, for any $p \in \Sigma$:
$$ -\sum_{\alpha,\beta,i}R_{i\alpha i\beta}v^\alpha v^\beta - \sum_{\alpha,\beta,i,j}h_{\alpha ij}h_{\beta ij} v^\alpha v^\beta \geq c_0 \sum_{\alpha} (v^\alpha)^2\, $$
for any $V = \sum_{\alpha} v^\alpha\bar{e}_\alpha \in N_p\Sigma$, where $(e_i)_{i=1,\ldots,n}$ and $(\bar{e}_\alpha)_{\alpha=1,\ldots,m}$ are orthonormal bases of $T_p\Sigma$ and $N_p\Sigma$ respectively and $(h_{\alpha ij})$ are the coefficients of $\mathcal{A}$. Note that strong stability implies the integrand in the second variation formula for the volume functional %in the direction of a normal section $V$ 
is pointwise positive along $\Sigma$, and so $\Sigma$ is strictly stable in the usual sense. 

Tsai and Wang show that there are many examples of strongly stable minimal submanifolds, see \cite[Proposition A]{TsaiWang17}.  Moreover, they show that strong stability implies local uniqueness of $\Sigma$ as a minimal submanifold as follows.

\begin{theorem}[Theorem A, \cite{TsaiWang17}] \label{thm:tsaiwang1} Let $\Sigma^n\subset (M,g)$ be a compact, oriented minimal submanifold which is strongly stable. There exists a tubular neighbourhood $U$ of $\Sigma$ such that $\Sigma$ is the only compact minimal submanifold in $U$ of dimension at least $n$.
\end{theorem}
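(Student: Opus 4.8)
The plan is to convert the pointwise positivity in the definition of strong stability into a maximum-principle statement for the squared-distance function to $\Sigma$. Fix a tubular neighbourhood $U=U_{r_0}$ of radius $r_0>0$, identified by the normal exponential map with the $r_0$-disc bundle of $N\Sigma$, and let $\rho\colon U\to\R$ be the square of the distance to $\Sigma$; for $r_0$ small enough this is smooth, vanishes exactly on $\Sigma$, and in Fermi coordinates $(x^i,y^\alpha)$ adapted to $\Sigma$ it is simply $\rho=\sum_\alpha(y^\alpha)^2$. Now let $\Sigma'\subset U$ be a compact minimal submanifold of some dimension $n'\geq n$. Since $\Sigma'$ is closed and $\rho|_{\Sigma'}\geq 0$ is smooth, $\rho|_{\Sigma'}$ attains a maximum at some $q\in\Sigma'$, where in particular $\Delta_{\Sigma'}\rho(q)\leq 0$. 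I will show that, for $r_0$ chosen small enough, $\rho(q)>0$ forces $\Delta_{\Sigma'}\rho(q)>0$; hence $\rho(q)=0$, so $\rho\equiv 0$ on $\Sigma'$ and $\Sigma'\subseteq\Sigma$. As $n'\geq n=\dim\Sigma$ and $\Sigma$ is connected, this gives $\Sigma'=\Sigma$ — and, incidentally, shows that there is no compact minimal submanifold of dimension strictly greater than $n$ in $U$ at all.

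The computation of $\Delta_{\Sigma'}\rho$ is the heart of the matter. As $\Sigma'$ is minimal, $\Delta_{\Sigma'}\rho=\operatorname{tr}_{T\Sigma'}\nabla^2\rho$, with no mean-curvature term. Using the Fermi expansion of the metric along $\Sigma$,
\[
g_{ij}=\delta_{ij}-2h_{\alpha ij}y^\alpha+O(|y|^2),\qquad g_{i\alpha}=O(|y|^2),\qquad g_{\alpha\beta}=\delta_{\alpha\beta}+O(|y|^2),
\]
one finds $\nabla^2_{AB}\rho=2\,\delta^{N\Sigma}_{AB}-2\,y^\gamma\Gamma^\gamma_{AB}$, where $\delta^{N\Sigma}$ picks out the normal directions; note that $\Gamma^\gamma_{i\alpha}$ and $\Gamma^\gamma_{\alpha\beta}$ vanish on $\Sigma$, while $\Gamma^\gamma_{ij}|_\Sigma=h_{\gamma ij}$. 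Tracing over $T_q\Sigma'$ at a point $q$ with foot point $p$, writing $y\in N_p\Sigma$ for the corresponding normal vector: the first-order-in-$y$ part of the $y^\gamma\Gamma^\gamma_{ij}$-term traces over the horizontal part of $T_q\Sigma'$ to a multiple of $\langle y,\sum_i h(e_i,e_i)\rangle$, which vanishes since $\Sigma$ is minimal; the second-order part traces to $2$ times the strong stability quadratic form $-R_{i\alpha i\beta}y^\alpha y^\beta-h_{\alpha ij}h_{\beta ij}y^\alpha y^\beta$, which by hypothesis is at least $c_0\,\rho(q)$; and the $\delta^{N\Sigma}$-term contributes $2\operatorname{tr}_{T_q\Sigma'}\Pi_{N\Sigma}\geq 0$, which is in fact $\geq 2(n'-n)$ when $n'>n$. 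The remaining terms — higher-order Fermi corrections and the contributions due to $T_q\Sigma'$ not being horizontal — are bounded by $Cr_0\big(\rho(q)+\operatorname{tr}_{T_q\Sigma'}\Pi_{N\Sigma}\big)$, precisely because the mixed Christoffel symbols above vanish on $\Sigma$ and $\sum_i h(e_i,e_i)=0$, so any contribution of the tilt carries an extra factor of the distance. Hence, for $r_0$ small depending only on $c_0$ and $\Sigma\subset(M,g)$,
\[
\Delta_{\Sigma'}\rho(q)\;\geq\; 2c_0\,\rho(q)+2\operatorname{tr}_{T_q\Sigma'}\Pi_{N\Sigma}-Cr_0\big(\rho(q)+\operatorname{tr}_{T_q\Sigma'}\Pi_{N\Sigma}\big)\;\geq\; c_0\,\rho(q),
\]
which is strictly positive whenever $\rho(q)>0$, giving the desired contradiction.

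The step I expect to be the main obstacle is this Fermi-coordinate computation itself: on the one hand, identifying the $|y|^2$-term in $\operatorname{tr}_{T_q\Sigma'}\nabla^2\rho$ with twice the strong stability quadratic form — a Gray-type identity in which minimality of $\Sigma$ is exactly what kills the $|y|$-term — and, on the other hand, controlling the error terms well enough to see that they are genuinely of lower order. The one truly delicate point is that containment in a thin tube does not by itself control the tilt of the tangent planes of $\Sigma'$ when the codimension $m\geq 2$, so $T_q\Sigma'$ need not be $C^1$-close to $T_p\Sigma$; this is handled by the two vanishing statements noted above, which force every contribution of the tilt to be accompanied by a further power of $\dist$ and hence to be absorbed by the manifestly nonnegative term $2\operatorname{tr}_{T_q\Sigma'}\Pi_{N\Sigma}$ once $r_0$ is small. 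Alternatively, one may first invoke Allard-type regularity to reduce to the case in which $\Sigma'$ is a $C^1$-small normal graph over $\Sigma$, where the tilt is automatically $O(\dist)$ and the estimate is cleaner.
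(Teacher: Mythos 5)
Your argument is correct in outline and is, in essence, the original Tsai--Wang route to their Theorem A: establish a Hessian inequality for the squared distance $\psi$ to $\Sigma$ on a thin tube, then apply the second-derivative test at an interior maximum of $\psi$ restricted to the competitor. The present paper does not reprove this estimate --- it cites it as Proposition \ref{thm:distest} (``$\mathrm{tr}_n\nabla^2\psi\geq c_1\psi$'', sum of the smallest $n$ eigenvalues) --- and it obtains uniqueness in a weaker category (Theorem \ref{thm:mainthm.2}, stationary integral varifolds up to multiplicity) by viewing the competitor as a static Brakke flow and running the barrier function $e^{c_1t}\psi-\eps t$ through White's avoidance principle (Theorem \ref{thm:barrier}), which is the measure-theoretic substitute for your pointwise maximum principle. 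What you do differently is (i) you re-derive the key estimate yourself in Fermi coordinates, in the form ``trace of $\nabla^2\psi$ over an arbitrary $n'$-plane, $n'\geq n$, is $\geq c_1\psi$'', handling the tilt of $T_q\Sigma'$ by showing every tilt contribution carries an extra factor of the distance and is absorbed by the nonnegative vertical trace --- this is a sound and complete scheme (your error bound $Cr_0(\psi+\mathrm{tr}_{T_q\Sigma'}\Pi_{N\Sigma})$ does absorb, e.g.\ via $\sqrt{\psi\,t}\leq(\psi+t)/2$), and it effectively proves the ``smallest $n$ eigenvalues'' statement in exactly the traced form needed for competitors of dimension $\geq n$; and (ii) you stay in the smooth category, so you get Theorem \ref{thm:tsaiwang1} directly but not the varifold strengthening, which is what the Brakke-flow barrier argument buys. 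Two small technical caveats in your Fermi expansion: $g_{i\alpha}=O(|y|^2)$ and $\Gamma^\gamma_{i\alpha}|_\Sigma=0$ are not automatic in general codimension --- they hold only after choosing the normal frame parallel for the normal connection at the foot point (the omitted terms are normal-connection coefficients); with that frame choice, or by noting that these extra terms are again tilt-weighted and absorbable by the same mechanism, your estimate goes through. The asserted identification of the second-order coefficient with the strong-stability form $-R_{i\alpha i\beta}y^\alpha y^\beta-h_{\alpha ij}h_{\beta ij}y^\alpha y^\beta$ is the Gray/Riccati tube computation that constitutes Tsai--Wang's Proposition 4.1; you assert rather than verify it, but it is exactly the content of the cited estimate, so there is no genuine gap.
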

A further consequence is a dynamical stability result.

\begin{theorem}[Theorem B, \cite{TsaiWang17}]\label{thm:tsaiwang2} Let $\Sigma^n\subset (M,g)$ be a compact, oriented minimal submanifold which is strongly stable. If $\Gamma$ is an $n$-dimensional submanifold that is close to $\Sigma$ in $C^1$, then the mean curvature flow $\Gamma_t$ with $\Gamma_0=\Gamma$ exists for all time, and $\Gamma_t$ converges to $\Sigma$ smoothly as $t\ra \infty$. 
\end{theorem}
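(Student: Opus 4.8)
The plan is to realise the mean curvature flow as a normal graph over $\Sigma$ and then to convert the \emph{pointwise} positivity built into strong stability into an exponentially decaying bound via the parabolic maximum principle. Fix a tubular neighbourhood of $\Sigma$ (which may be taken inside the neighbourhood $U$ of Theorem~\ref{thm:tsaiwang1}). Since $\Gamma$ is $C^1$-close to $\Sigma$, we may write $\Gamma = \{\exp^\perp_p V_0(p): p\in\Sigma\}$ for a section $V_0$ of $N\Sigma$ with $\|V_0\|_{C^1}$ small, and as long as the flow $\Gamma_t$ stays graphical inside the tube it is described by a time-dependent section $V_t$ of $N\Sigma$. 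In this representation the mean curvature flow equation becomes a quasilinear parabolic system $\partial_t V = \mathcal{L}V + \mathcal{Q}(V,\nabla V,\nabla^2 V)$, where $\mathcal{L}$ is (minus) the Jacobi/stability operator of $\Sigma$ and $\mathcal{Q}$ vanishes to second order in $(V,\nabla V)$, with the coefficients of its $\nabla^2 V$ term vanishing at $V=0$. Short-time existence of $\Gamma_t$ for smooth compact $\Gamma$ is standard.

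The key step is to track a scalar quantity comparable to the squared distance to $\Sigma$, say $\varphi = \tfrac12 |V_t|^2$ viewed as a function on $\Gamma_t$. Using $\partial_t x = H$, the minimality of $\Sigma$, and the fact that the first-order part of the evolution is governed by the Jacobi operator, one computes a differential inequality of the shape $(\partial_t - \Delta_{\Gamma_t})\varphi \le -(\text{second variation integrand evaluated on } V_t) + (\text{higher order terms})$. The strong stability bound
$$-\sum_{\alpha,\beta,i}R_{i\alpha i\beta}v^\alpha v^\beta - \sum_{\alpha,\beta,i,j}h_{\alpha ij}h_{\beta ij}v^\alpha v^\beta \ge c_0\sum_\alpha (v^\alpha)^2$$
is exactly what makes the leading term $\le -2c_0\varphi$ pointwise, so that if $\|V_t\|_{C^1}$ is small enough to absorb the error terms we obtain $(\partial_t - \Delta_{\Gamma_t})\varphi \le -c_0\varphi$ along the flow.

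Granting this inequality, the parabolic maximum principle on the compact hypersurface $\Gamma_t$ gives $\sup_{\Gamma_t}|V_t|^2 \le e^{-c_0 t}\sup_\Sigma |V_0|^2$ for as long as the flow remains a small $C^1$-graph in the tube. Interior gradient and higher-order estimates for mean curvature flow graphs over the fixed submanifold $\Sigma$ (of Ecker--Huisken / Schauder type) then bound $\|V_t\|_{C^1}$, and inductively $\|V_t\|_{C^k}$, in terms of the small $C^0$-norm of $V$ on a slightly larger backward parabolic neighbourhood together with the initial data. Hence smallness of $\|V_0\|_{C^1}$ is preserved: a continuity (open--closed) argument shows $\Gamma_t$ never leaves the region where it is a small $C^1$-graph, so the flow exists for all $t\ge 0$. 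Feeding the exponential decay of $\sup|V_t|$ back into the interior estimates and interpolating yields $\|V_t\|_{C^k}\to 0$ exponentially for every $k$, i.e. $\Gamma_t$ converges to $\Sigma$ smoothly.

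I expect the main obstacle to be the second step: producing a \emph{single} pointwise differential inequality in which the strong stability quantity appears with the correct sign and in which all remaining terms are genuinely absorbed for $C^1$-small graphs. This is precisely where strong stability, rather than mere strict stability, is used — a spectral gap for $\mathcal{L}$ would only furnish $L^2$-decay, whereas the $C^0$ control here is what keeps the flow graphical and delivers long-time existence with minimal fuss. The second delicate point is closing the bootstrap consistently: smallness of $\|V_t\|_{C^1}$ is needed to apply the maximum principle, and the maximum principle is in turn needed to keep $\|V_t\|_{C^1}$ small, so the continuity argument and the interior estimates must be arranged to feed each other cleanly.
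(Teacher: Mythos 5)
The first half of your plan is in line with the mechanism this paper attributes to Tsai--Wang: strong stability is converted into a pointwise differential inequality for a quantity comparable to the squared distance to $\Sigma$, and the parabolic maximum principle then gives exponential $C^0$-decay. In fact the paper's formulation (Proposition \ref{thm:distest}) is cleaner than your graphical one: the ambient squared distance function $\psi$ satisfies $\operatorname{tr}_n\nabla^2\psi\ge c_1\psi$ on a tube, so $(\partial_t-\Delta_{\Gamma_t})\psi\le -c_1\psi$ along any mean curvature flow staying in the tube, with no need to represent the flow as a normal graph or to absorb quasilinear error terms; this is exactly the barrier argument around \eqref{eq.0}--\eqref{eq.1}, which even applies to weak flows via Theorem \ref{thm:barrier}.

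The genuine gap is in the second half: the upgrade from $C^0$-decay to long-time existence and smooth convergence. You invoke ``interior gradient and higher-order estimates of Ecker--Huisken / Schauder type'' to bound $\|V_t\|_{C^1}$, and then inductively $\|V_t\|_{C^k}$, by the small $C^0$-norm. Ecker--Huisken's interior gradient estimate is a codimension-one result; here the codimension $m$ is arbitrary (and is large in the motivating examples, e.g.\ the zero section of $T^*S^n$), and no such unconditional gradient estimate exists for higher-codimension graphical mean curvature flow. Consequently a bound on $\sup|V_t|$ alone neither keeps the flow graphical (graphicality is a $C^1$ statement) nor excludes finite-time curvature blow-up inside the tube, so your open--closed argument does not close: the step ``$C^0$ small on a backward parabolic neighbourhood $\Rightarrow C^1$ small'' is precisely the missing estimate, not a standard one. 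This is where the real work lies --- in Tsai--Wang's proof the $C^1$/second-fundamental-form control comes from separate evolution inequalities in which the strong stability and curvature hypotheses enter again, while the present paper sidesteps smooth long-time existence entirely by running an enhanced Brakke flow and recovering smoothness only asymptotically via the mass bound $\bfM[\Gamma]<2|\Sigma|$, the constancy theorem and Brakke's regularity theorem (proof of Theorem \ref{thm:mainthm}); note that this latter route requires homological and mass ingredients that your proposal does not supply.
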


We first note the local uniqueness result extends to a considerable weaker setting.

\begin{theorem}\label{thm:mainthm.2} Let $\Sigma^n\subset (M,g)$ be a compact minimal submanifold which is strongly stable. There exists a tubular neighbourhood $U$ of $\Sigma$ such that, up to higher multiples of $\Sigma$, there is no other stationary integral varifold with support in $U$ of dimension greater than or equal to $n$.
\end{theorem}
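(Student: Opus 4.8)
The plan is to promote Tsai and Wang's proof of Theorem~\ref{thm:tsaiwang1} to the varifold setting by testing the first variation identity against the gradient of the auxiliary function underlying their argument. After possibly shrinking $U$ (which we may take to have compact closure), the proof of Theorem~\ref{thm:tsaiwang1} produces a smooth function $\Phi\colon U\to[0,\infty)$ with $\{\Phi=0\}=\Sigma$ — one may take $\Phi=\tfrac12\dist(\cdot,\Sigma)^2$, adjusted by lower-order terms as in \cite{TsaiWang17} — such that for every $p\in U$ and every linear subspace $S\subseteq T_pM$ with $\dim S\ge n$,
\[
\operatorname{div}_S\nabla\Phi(p)\ \ge\ 0,
\]
with equality if and only if $p\in\Sigma$ and $S=T_p\Sigma$; here $\operatorname{div}_S\nabla\Phi(p)=\sum_i\langle\nabla_{e_i}\nabla\Phi,e_i\rangle$ over any orthonormal basis $(e_i)$ of $S$, i.e.\ the trace over $S$ of the ambient Hessian of $\Phi$, which is precisely the quantity appearing in the first variation formula. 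In particular this inequality is strict whenever $p\notin\Sigma$ or $\dim S>n$. For $\dim S=n$ the non-negativity is the computation driven by strong stability in \cite{TsaiWang17}; at points of $\Sigma$ the ambient Hessian of $\Phi$ is, up to the lower-order modification, the orthogonal projection onto $N_p\Sigma$, which yields the inequality and its equality case for all $\dim S\ge n$.

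Next, let $V$ be a stationary integral varifold in $M$ of some dimension $k\ge n$ with $\spt\|V\|\subseteq U$. As $\spt\|V\|$ is closed and contained in $U$, which has compact closure, it is compact, so we may choose $\eta\in C^\infty_c(U)$ with $\eta\equiv1$ on an open neighbourhood of $\spt\|V\|$. Testing $\delta V=0$ against $X=\eta\,\nabla\Phi$ and using $\eta\equiv1$, $\nabla\eta\equiv0$ on $\spt\|V\|$ gives
\[
0\ =\ \int\operatorname{div}_SX\,dV(x,S)\ =\ \int\operatorname{div}_S\nabla\Phi(x)\,dV(x,S).
\]
For $\|V\|$-a.e.\ $(x,S)$ the plane $S$ is $k$-dimensional with $k\ge n$, so the integrand is non-negative everywhere and must vanish $V$-a.e.; by the equality case above, $x\in\Sigma$ and $S=T_x\Sigma$ for $V$-a.e.\ $(x,S)$. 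If $k>n$ this is impossible unless $\|V\|=0$, since $S=T_x\Sigma$ cannot hold for a plane of dimension $>n$; hence $U$ contains no nonzero stationary integral varifold of dimension $>n$. If $k=n$, we have shown $\spt\|V\|\subseteq\Sigma$ and that $V$ is carried by $\Sigma$ with some multiplicity, and the constancy theorem for stationary integral varifolds supported in a smooth submanifold forces that multiplicity to be a non-negative integer, locally constant on $\Sigma$. Thus $V$ is a non-negative integer multiple of $\Sigma$, component by component, which is the assertion of Theorem~\ref{thm:mainthm.2}.

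The only non-formal ingredient is the existence of $\Phi$ with the stated property, which is the heart of \cite{TsaiWang17}; the step I would check first is that their Hessian estimate is genuinely valid for all planes of dimension $\ge n$ rather than only $\dim S=n$, which I expect to be immediate from their construction together with the behaviour of $\Phi$ near $\Sigma$. Everything after that is a routine first-variation and constancy-theorem argument. Note finally that, in contrast with Theorem~\ref{thm:tsaiwang1}, no orientability of $\Sigma$ is used, since neither $\Phi$ nor the constancy theorem refers to an orientation.
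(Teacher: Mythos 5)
Your argument is correct, but it is not the route the paper actually takes: the paper deduces Theorem \ref{thm:mainthm.2} from Theorem \ref{thm:mainthm}, by viewing the stationary varifold $\Gamma^{n+k}$ as a static (enhanced) Brakke flow, noting that the barrier function \eqref{eq.0} built from Proposition \ref{thm:distest} works for any dimension $\geq n$, and then invoking the avoidance/barrier theorem (Theorem \ref{thm:barrier}) to force $\spt\Gamma\subset\Sigma$, after which $k=0$ and the constancy theorem finish the proof. Your proof is precisely the direct alternative that the paper mentions in one sentence but does not carry out (``one can use Proposition \ref{thm:distest} and the first variation formula for stationary varifolds''): test $\delta V=0$ against a cut-off of $\nabla\psi$, use $\operatorname{tr}_S\nabla^2\psi\geq c_1\psi\geq 0$ for all planes $S$ of dimension $\geq n$ to conclude $\psi=0$ on $\spt\|V\|$, and then rule out $k>n$ and apply constancy for $k=n$. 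What your route buys is self-containedness: it needs neither Ilmanen's existence theorem, nor White's barrier theorem, nor any mass or homology hypothesis, whereas the paper's route is essentially free once Theorem \ref{thm:mainthm} is in hand. Two small points you flag are indeed fine but deserve the one-line arguments: (i) the extension of Proposition \ref{thm:distest} to planes of dimension $k\geq n$ is immediate because $\lambda_n\geq\tfrac1n(\lambda_1+\cdots+\lambda_n)\geq \tfrac{c_1}{n}\psi\geq 0$, so the sum of the smallest $k$ eigenvalues dominates $\operatorname{tr}_n\nabla^2\psi\geq c_1\psi$ (this is exactly the fact the paper also uses, implicitly, when it says the barrier works for all dimensions $\geq n$); and (ii) no lower-order correction of $\psi$ is needed, since Proposition \ref{thm:distest} is stated for the squared distance itself, whose ambient Hessian along $\Sigma$ is twice the projection onto $N\Sigma$, giving your equality case. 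Also, for $k>n$ it is slightly cleaner to note that a $k$-rectifiable set carried inside the $n$-manifold $\Sigma$ is $\mathcal{H}^k$-null, so $\|V\|=0$, though your equality-case argument reaches the same conclusion.
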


We also show  that dynamical stability  extends to much weaker initial conditions.

\begin{theorem}\label{thm:mainthm} Let $\Sigma^n\subset (M,g)$ be a compact, oriented minimal submanifold which is strongly stable. Then there exists a tubular neighbourhood $U$ of $\Sigma$ such that the following holds. Let $\Gamma$ be an integral $n$-current in $U$ which is in the same homology class (as currents) as $\Sigma$ in $U$ such that $\bfM [\Gamma] < 2 |\Sigma|$. Furthermore, let $\{\mu_t\}_{t\geq 0}$ be an enhanced Brakke flow starting at $\Gamma$. Then $\mu_t$ is non-vanishing for any $t\geq 0$ and for $t \rightarrow \infty$ converges smoothly to $\Sigma$.
\end{theorem}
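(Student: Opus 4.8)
The plan is to combine three ingredients: the mean-convex barrier foliation that strong stability produces (the same one underlying the proof of Theorem~\ref{thm:tsaiwang1}), the current/homology structure carried by Ilmanen's enhanced Brakke flows, and the smooth dynamical stability of Theorem~\ref{thm:tsaiwang2} to close the argument at large times. First I would take $U$ to be a normal tube $T_{r_0}=\{x : \dist(x,\Sigma)<r_0\}$ with $r_0$ small enough that Theorem~\ref{thm:mainthm.2} applies on $U$ and, following \cite{TsaiWang17}, each boundary $\partial T_r$ with $0<r\le r_0$ is strictly mean-convex towards $\Sigma$, hence a barrier for mean curvature flow. Since $\spt\Gamma$ is a compact subset of $U$ it lies in some $T_{r_1}$ with $r_1<r_0$, so the avoidance principle for enhanced Brakke flows against smooth mean-convex barriers \cite{Ilmanen} yields $\spt\mu_t\subset T_{r_1}\Subset U$ for all $t\ge 0$; because the leaf $\partial T_{r_1}$ itself moves strictly inward under its own flow, restarting the barrier shows $\spt\mu_t\subset T_{r_2}$ with some fixed $r_2<r_1$ for all $t$ beyond some $T_0$. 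As Brakke flows do not increase mass, $\bfM[\mu_t]\le\bfM[\Gamma]<2|\Sigma|$ for every $t\ge0$.

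I would next establish non-vanishing and identify the limiting multiplicity. Since $\Sigma$ is a deformation retract of $U$, $H_n(U;\mathbb{Z})\cong H_n(\Sigma;\mathbb{Z})$ and the fundamental class of the closed oriented manifold $\Sigma$ is non-zero, so the hypothesis gives $[\Gamma]=[\Sigma]\ne0$ in $H_n(U;\mathbb{Z})$. Ilmanen's enhanced Brakke flow carries, for a.e.\ $t$, an underlying integral cycle $[\mu_t]$ that is continuous in the flat topology, dominated by the weight measure ($|[\mu_t]|\le\mu_t$), and homologous in $U$ to $[\Gamma]$ for as long as the flow remains in $U$ \cite{Ilmanen}; by the previous step this is all $t$. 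Hence $[\mu_t]=[\Sigma]\ne0$, so $\mu_t\ge|[\mu_t]|\ne0$ for a.e.\ $t$, and since mass is non-increasing this forces $\mu_t\ne0$ for every $t\ge0$.

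For convergence, I would use that the Brakke inequality gives $\int_0^\infty\!\!\int|H|^2\,d\mu_t\,dt<\infty$; together with the confinement this lets me pick $t_i\to\infty$ along which the time-translated flows $t\mapsto\mu_{t_i+t}$ subconverge, by Brakke compactness, to a static Brakke flow, i.e.\ to a stationary integral varifold $V_\infty$ supported in the fixed tube $\overline{T_{r_2}}\subset U$. Theorem~\ref{thm:mainthm.2} then forces $V_\infty=\theta\,\Sigma$ for some integer $\theta\ge0$; the mass bound gives $\theta|\Sigma|=\bfM[V_\infty]\le\liminf_i\bfM[\mu_{t_i}]<2|\Sigma|$, so $\theta\le1$, while the flat limit $T_\infty$ of the cycles $[\mu_{t_i}]$ lies in the class $[\Sigma]\ne0$ and satisfies $|T_\infty|\le V_\infty$, so $\theta\ge1$; hence $\theta=1$ and $V_\infty=\Sigma$ with multiplicity one. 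By White's local regularity theorem for Brakke flows, $\mu_{t_i+t}\to\Sigma$ smoothly on compact time intervals once $i$ is large, so there is a finite time $T_1$ at which $\mu_{T_1}$ is a smooth normal graph over $\Sigma$ of arbitrarily small $C^1$-norm. Finally, applying Theorem~\ref{thm:tsaiwang2} with initial submanifold $\mu_{T_1}$, the classical mean curvature flow from $\mu_{T_1}$ exists for all time and converges smoothly to $\Sigma$; since $\{\mu_{T_1+s}\}_{s\ge0}$ is smooth at $s=0$ it must coincide with this flow while it stays smooth, and as the classical flow never becomes singular the coincidence, and hence the smooth convergence to $\Sigma$, persists for all $s\ge0$. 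This upgrades the subsequential limit to a genuine limit as $t\to\infty$.

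I expect the crux to be the second step: one must check carefully that Ilmanen's enhancement genuinely supplies a current structure which both remains in the homology class $[\Sigma]$ inside $U$ and is dominated by the weight measure $\mu_t$, so that the hypotheses $[\Gamma]=[\Sigma]$ and $\bfM[\Gamma]<2|\Sigma|$ can be converted into non-vanishing and into the multiplicity-one identification of $V_\infty$ — this is precisely where the word ``enhanced'' is doing work. The remaining points (the avoidance principle against the Tsai--Wang barriers, Brakke compactness and $L^2$-decay of the mean curvature, White's regularity, and the coincidence of a smooth enhanced Brakke flow with the classical flow, as well as verifying that the flow and $V_\infty$ stay away from $\partial U$) are standard once assembled, together with Theorems~\ref{thm:mainthm.2} and~\ref{thm:tsaiwang2}.
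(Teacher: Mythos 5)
Your overall skeleton (a barrier argument to confine the flow, the enhanced current structure to rule out vanishing and fix the multiplicity via the mass bound, subsequential limits plus Brakke regularity) is close to the paper's, but two steps as written have genuine gaps.

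First, your identification of the long-time limit invokes Theorem \ref{thm:mainthm.2}, which in this paper is \emph{deduced from} Theorem \ref{thm:mainthm}, so as it stands the argument is circular; you would have to supply the independent proof (via Proposition \ref{thm:distest} and the first variation formula), which you do not. The reason you are forced into this is that your barrier step only confines $\spt\mu_t$ to a fixed smaller tube $T_{r_2}$. The paper instead applies White's barrier theorem (Theorem \ref{thm:barrier}) to $u=e^{c_1t}\psi-\eps t$, where $\psi$ is the squared distance to $\Sigma$ and Proposition \ref{thm:distest} gives $\mathrm{tr}_n\nabla^2\psi\geq c_1\psi$; this yields $\psi\leq e^{-c_1t}\eps_1^2$ on $\spt\mu_t$, i.e.\ the support contracts \emph{exponentially onto} $\Sigma$. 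Then any limit of time-translated flows is supported in $\Sigma$, and the constancy theorem together with $\bfM[\Gamma]<2|\Sigma|$ identifies it as multiplicity-one $\Sigma$ with no uniqueness theorem for stationary varifolds needed. Note also that in codimension $m>1$ "mean-convexity" of the tube boundaries $\partial T_r$ is not the right barrier condition for an $n$-dimensional flow; what is needed (and what Proposition \ref{thm:distest} provides) is positivity of the sum of the smallest $n$ Hessian eigenvalues, which is exactly the hypothesis of Theorem \ref{thm:barrier}.

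Second, your final upgrade from subsequential to full convergence rests on the claim that, once $\mu_{T_1}$ is a smooth graph, the enhanced Brakke flow must coincide with the classical mean curvature flow of Theorem \ref{thm:tsaiwang2} while the latter stays smooth. Brakke flows are not unique from smooth initial data --- the defining inequality permits instantaneous loss of mass, and the enhanced structure only bounds $\mu_t$ from below by the current part --- so this coincidence is not automatic and would need a separate argument. The paper does not need it: since the argument applies to an arbitrary sequence $t_i\to\infty$ and every subsequential limit is the same static, multiplicity-one flow associated to $\Sigma$, Brakke's regularity theorem makes the convergence along these subsequences smooth, and full smooth convergence as $t\to\infty$ follows (helped again by the exponential decay of the support). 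If you replace your confinement step by the paper's exponentially decaying barrier and drop the uniqueness/coincidence claim in favour of this subsequence argument, your proof closes.
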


Here $|\Sigma|$ denotes the volume of $\Sigma$ and $\bfM [\,\cdot\,]$ the mass of a current. For the definition of an enhanced Brakke flow see Theorem \ref{thm:enhanced-flow}.  We shall deduce Theorem \ref{thm:mainthm.2} from Theorem \ref{thm:mainthm}: both are proved in Section \ref{sec:extension}.

\begin{remark} One can drop the assumption that $\Sigma$ is orientable by working with flat chains mod 2 instead of integral currents. Then the same results hold true.
\end{remark}

We shall apply our results to show that we can, in some important cases of interest, flow through the singularities of Lagrangian mean curvature flow which are proved to occur in the groundbreaking work of Neves \cite{NevesFTS}.  We also obtain global long-time existence and smooth convergence of an enhanced Brakke flow starting from weak initial conditions in key examples of complete Ricci-flat manifolds with special holonomy.  See Section \ref{sec:applications} for these applications.

\paragraph{{\bf Acknowledgements}} This research was supported by an HIMR Focused Research Grant and Leverhulme Trust Research Project Grant RPG-2016-174.

\section{Extension to enhanced Brakke flows}\label{sec:extension}
Recall that a family of Radon measures $(\mu_t)_{t\geq 0}$ on $M$  is called an integral $n$-Brakke flow, provided, given any $\phi \in
C_c^2(M;\R^+)$, the following inequality holds for every $t>0$
\begin{equation}\label{brakkeflow}
\bar{D}_t\mu_t(\phi)\leq \int -\phi |\bfH|^2 + \langle\nabla \phi,
\bfH \rangle \, d\mu_t,
\end{equation}
where $\bar{D}_t$ denotes the upper derivative at time $t$, and $\bfH$ is the weak mean curvature vector. We take
the right-hand side to be $-\infty$  if $\mu_t$ is not the mass measure of an integral $n$-varifold which carries a weak mean curvature which is summable in $L^2$. Note that in the case $\mu_t$ corresponds to a smooth motion by
mean curvature flow, $\bar{D}_t$ is just the usual derivative and we have
equality in \eqref{brakkeflow}. For more details we refer the reader to \cite{Ilmanen}.

We recall Ilmanen's existence result for enhanced Brakke flows, which is proven using an elliptic regularisation
scheme.

\begin{theorem}[\cite{Ilmanen}, \S 8.1]\label{thm:enhanced-flow}
  Let $T_0$ be a local
integral $n$-current in $(M^{n+m},g)$ with $\partial T_0 = 0$, 
finite mass ${\bf M}[T_0]<\infty$ and compact support. There exists a local integral
$(n+1)$-current $T$ in $M\times [0,\infty)$ and a family
$\{\mu_t\}_{t\geq 0}$ of Radon measures on $M$ such that
\begin{itemize}
\item[$(i)$] (a) $\partial T = T_0$\\[1ex]
  (b) ${\bf M}[T_B]$, where $T_B = T\res (M\times B),\ B\subset
  [0,\infty)$, is absolutely continuous with respect to $\mathcal{L}^1(B)$.\\[-2ex]
\item[$(ii)$] (a) $\mu_0=\mu_{T_0}, {\bf M}[\mu_t]\leq {\bf M}[\mu_0]$
  for $t>0$.\\[1ex]
(b) $\{\mu_t\}_{t\geq 0}$ is an integral $n$-Brakke flow.\\[-2ex]
\item[$(iii)$] $\mu_t\geq \mu_{\pi_\#(T_t)}$ for each $t\geq 0$, where $T_t$ is
  the slice $\partial(T\res (M^{m+k}\times [t,\infty))$ and $\pi: M\times \R \ra M$ is the projection on the first factor.  
\end{itemize}
\end{theorem}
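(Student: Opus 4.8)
For completeness we recall the \emph{elliptic regularisation} argument of \cite{Ilmanen}: the flow is produced as a limit of translating solitons obtained by a weighted area minimisation. Fix $\varepsilon>0$, write $z$ for the coordinate on the $[0,\infty)$-factor of $M\times[0,\infty)$, and consider the weighted mass
\be
  I_\varepsilon(S)=\int e^{-z/\varepsilon}\,d\|S\|
\ee
over local integral $(n+1)$-currents $S$ in $M\times[0,\infty)$ with $\partial S=T_0$ (regarded as supported in $M\times\{0\}$). Since $I_\varepsilon(S)$ is the mass of $S$ in the conformal metric $e^{-2z/((n+1)\varepsilon)}(g\oplus dz^2)$, a minimiser $S_\varepsilon$ is area-minimising for that metric; it exists by the Federer--Fleming compactness theorem, using the competitor $T_0\times[0,\infty)$ (which has $I_\varepsilon=\varepsilon\,\bfM[T_0]$, and whose exponential weight forces minimisers to have compact support), and by the regularity theory for area-minimising integral currents it is, away from a lower-dimensional singular set, a smooth submanifold carrying a locally bounded generalised mean curvature. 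The conformal change formula (equivalently, the first variation of $I_\varepsilon$) then gives the translating-soliton equation $\bfH_{S_\varepsilon}=-\tfrac{1}{\varepsilon}(\partial_z)^{\perp}$.

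This equation says exactly that the slices $\Sigma^\varepsilon_z:=\langle S_\varepsilon,z,\cdot\rangle$ move, as $z$ increases, by $\varepsilon$ times their mean curvature in $M$. Hence, after the rescaling $\Phi_\varepsilon\colon(x,z)\mapsto(x,\varepsilon z)$, the current $\widehat S_\varepsilon:=(\Phi_\varepsilon)_\#S_\varepsilon$ is the space-time track of the family $\widehat\mu^\varepsilon_t:=\mu_{\Sigma^\varepsilon_{t/\varepsilon}}$, $t\ge0$, which is a mean curvature flow, smooth off a lower-dimensional singular set that is $\mathcal H^n$-null in almost every time slice, and in particular is an integral $n$-Brakke flow. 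A comparison argument with the cylinder over $T_0$, using $I_\varepsilon(S_\varepsilon)\le\varepsilon\,\bfM[T_0]$ and barriers near $z=0$, shows $\widehat\mu^\varepsilon_0\to\mu_{T_0}$; since $t\mapsto\bfM[\widehat\mu^\varepsilon_t]$ is non-increasing, $\limsup_{\varepsilon\to0}\sup_{t\ge0}\bfM[\widehat\mu^\varepsilon_t]\le\bfM[T_0]$, and $\tfrac{d}{dt}\bfM[\widehat\mu^\varepsilon_t]=-\int|\bfH_{\widehat\mu^\varepsilon_t}|^2\,d\widehat\mu^\varepsilon_t$ gives $\int_0^T\!\int|\bfH_{\widehat\mu^\varepsilon_t}|^2\,d\widehat\mu^\varepsilon_t\,dt\le\bfM[T_0]+o(1)$ for every $T$.

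Now let $\varepsilon\to0$. Huisken's monotonicity formula holds for the $\widehat\mu^\varepsilon_t$ with constants independent of $\varepsilon$ (they are genuine mean curvature flows after rescaling), so Brakke's compactness theorem --- in the form of \cite{Ilmanen}, which secures integrality of the limit via the clearing-out lemma --- produces $\varepsilon_j\to0$ with $\widehat\mu^{\varepsilon_j}_t\to\mu_t$ for every $t\ge0$ and $\{\mu_t\}_{t\ge0}$ an integral $n$-Brakke flow; this is $(ii)(b)$, while $\mu_0=\mu_{T_0}$ and $\bfM[\mu_t]\le\bfM[\mu_0]$ follow from the previous paragraph and lower semicontinuity, and monotonicity of $\bfM[\mu_t]$ is immediate from \eqref{brakkeflow}. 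Meanwhile, since $\widehat S_\varepsilon$ is the track of $\widehat\mu^\varepsilon_t$ one has the coarea estimate $\bfM[\widehat S_\varepsilon\res(M\times B)]\le\int_B\bfM[\widehat\mu^\varepsilon_t]\,dt+\int_B\!\int|\bfH_{\widehat\mu^\varepsilon_t}|\,d\widehat\mu^\varepsilon_t\,dt$, and Cauchy--Schwarz with the bounds above yields, uniformly in $\varepsilon$,
\be
  \bfM[\widehat S_\varepsilon\res(M\times B)]\le\bfM[T_0]\big(\mathcal L^1(B)+\mathcal L^1(B)^{1/2}\big)+o(1),
\ee
which, together with $\bfM[\partial(\widehat S_\varepsilon\res(M\times[0,R]))]\le2\,\bfM[T_0]+o(1)$, lets us pass along a further subsequence to $\widehat S_{\varepsilon_j}\to T$ as local integral $(n+1)$-currents. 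Continuity of $\partial$ under flat convergence and $\partial\widehat S_{\varepsilon_j}=T_0$ give $(i)(a)$, and lower semicontinuity of mass in the displayed bound gives $(i)(b)$. For $(iii)$: for a.e.\ $t$ the slices $\langle\widehat S_{\varepsilon_j},z,t\rangle$ converge as currents to the slice defining $T_t$, and since the mass of an integral current is dominated by that of its underlying varifold, $\mu_{\pi_\#\langle\widehat S_{\varepsilon_j},z,t\rangle}\le\widehat\mu^{\varepsilon_j}_t$; letting $j\to\infty$, lower semicontinuity of mass on the left and $\widehat\mu^{\varepsilon_j}_t\to\mu_t$ on the right give $\mu_{\pi_\#(T_t)}\le\mu_t$ for a.e.\ $t$, hence for all $t$ by the semicontinuity in $t$ of both sides.

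The technical heart is this final step: one needs uniform-in-$\varepsilon$ local area and regularity estimates for the soliton minimisers --- where their minimising property, the monotonicity formula, and Brakke's clearing-out and compactness machinery all enter --- so that the tracks $\widehat S_\varepsilon$ subconverge to a genuine integral Brakke flow, and one must carry the integral-current structure through the limit so that the limiting measures dominate the slices of the limiting current, as in $(iii)$. In general the $S_\varepsilon$ may have a nonempty singular set (area-minimising currents being fully regular only in low dimension and codimension), whose harmlessness for all of the above is guaranteed by the dimension bounds and stratification theory for area-minimising currents.
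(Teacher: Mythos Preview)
The paper does not give its own proof of this theorem; it merely recalls the statement from \cite{Ilmanen}, noting only that it is ``proven using an elliptic regularisation scheme.'' There is thus nothing in the paper to compare your argument against. Your sketch is a reasonable high-level outline of Ilmanen's actual proof, and it correctly identifies the main ingredients: the weighted-mass minimisation producing translating solitons, the parabolic rescaling, Brakke/current compactness, and the domination of slice masses by the limit measures.

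One substantive imprecision is worth flagging. You write that the slices $\Sigma^\varepsilon_z$ of the minimiser ``move, as $z$ increases, by $\varepsilon$ times their mean curvature in $M$,'' and then apply Brakke compactness directly to these slice flows in $M$. That is not how the argument runs: the translating-soliton equation makes the translates $S_\varepsilon-(t/\varepsilon)\partial_z$ an $(n+1)$-dimensional mean curvature flow in $M\times\R$, but the $z$-slices of such a flow do \emph{not} in general form a mean curvature flow in $M$. Ilmanen instead passes to the limit of the rescaled $(n+1)$-dimensional flows in $M\times\R$, shows the limiting Brakke flow there is invariant under $z$-translation (this is where the $\varepsilon$-rescaling is used), and only then identifies it with a product $\{\mu_t\}\times\R$ and projects to $M$. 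Your shortcut elides this $z$-invariance step, and without it the compactness and integrality of $\{\mu_t\}$ are not cleanly available. The remaining pieces of your outline --- the coarea bound for the track currents, the flat-limit argument for $T$, and the lower-semicontinuity reasoning behind $(iii)$ --- are in the right spirit, though the extension of $(iii)$ from a.e.\ $t$ to all $t$ requires a bit more care than a one-line appeal to semicontinuity.
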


 Ilmanen calls  $(\{\mu_t\}_{t\geq 0}, T)$ with the above properties an enhanced Brakke motion. We will instead call this an \emph{enhanced Brakke flow}.

Tsai--Wang's local uniqueness and long-time convergence results (Theorems \ref{thm:tsaiwang1}--\ref{thm:tsaiwang2}) hinge on the following estimate for the squared distance function $\psi$ to $\Sigma$, which we reformulate slightly for our purposes. Note, although stated there, orientability of $\Sigma$ is not needed for the proof.

\begin{proposition}[Proposition 4.1, \cite{TsaiWang17}] \label{thm:distest}
Let $\Sigma^n \subset (M,g)$ be a compact minimal submanifold which is strongly stable. There exist positive constants $\eps_1$ and $c_1$, which depend on the geometry of $M$ and $\Sigma$, such that on the tubular neighbourhood $U_{\eps_1}$ of $\Sigma$ we have:
$$ \text{\emph{tr}}_n \nabla^2\psi \geq c_1 \psi\ ,$$
where $\nabla^2\psi$ is the Hessian of $\psi$, and $\text{\emph{tr}}_n$ is the sum of the smallest $n$ eigenvalues.
\end{proposition}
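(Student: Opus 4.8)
The plan is to Taylor‑expand $\operatorname{tr}_n\nabla^2\psi$ to second order in the normal direction to $\Sigma$ and to recognise its leading nontrivial term as a positive multiple of the quadratic form occurring in the definition of strong stability; the zeroth‑ and first‑order parts will vanish — the first because $\Sigma$ is minimal — and the cubic remainder will be absorbed by taking $\eps_1$ small. To set up: fix $\eps_1$ small enough that $\psi$ is smooth on the tubular neighbourhood $U_{\eps_1}$. Along $\Sigma$ itself $\psi=0$, $\nabla\psi=0$, and $\nabla^2\psi$ is a positive constant times the orthogonal projection onto $N\Sigma$, so its eigenvalues are $0$ with multiplicity $n$ and a fixed positive number with multiplicity $m$; thus the inequality holds trivially on $\Sigma$, and for $|v|$ small the $n$ smallest eigenvalues of $\nabla^2\psi$ are exactly those near $0$, separated by a fixed gap from the rest. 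It remains to estimate their sum off $\Sigma$.

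Fix $p\in\Sigma$ and use Fermi coordinates $(x,v)$ centred at $p$: take $(x^i)$ to be normal coordinates at $p$ for the induced metric on $\Sigma$, and $(v^\alpha)$ the fibre coordinates of an orthonormal normal frame $(\bar e_\alpha)$ chosen parallel at $p$ for the normal connection. Then $\psi=|v|^2$, and since its only nonzero first derivatives are $\partial_{v^\alpha}\psi=2v^\alpha$, one has $(\nabla^2\psi)_{AB}=2\delta^N_{AB}-2\sum_\gamma\Gamma^\gamma_{AB}v^\gamma$ with $\delta^N$ the Kronecker symbol on normal indices. The classical Fermi (tube) expansion of $g$ gives, at $p$: $g_{AB}|_p=\delta_{AB}$; $g_{\alpha\beta}\equiv\delta_{\alpha\beta}$ and $g_{i\alpha}\equiv 0$ along $\Sigma$; $\partial_{x^k}g_{ij}|_p=0$; $\partial_{v^\gamma}g_{ij}|_p=-2h_{\gamma ij}$; and, by the parallel frame, $\partial_{v^\gamma}g_{i\alpha}|_p=\langle\nabla^\perp_{e_i}\bar e_\gamma,\bar e_\alpha\rangle(p)=0$. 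Hence $\Gamma^\gamma_{ij}|_p=h_{\gamma ij}$, $\Gamma^k_{ij}|_p=0$, $\Gamma^\gamma_{i\alpha}|_p=0$, $\Gamma^k_{\alpha i}|_p=-h_{\alpha ik}$, and $\sum_i\Gamma^e_{ii}|_p=0$ for every index $e$ (the last using minimality, $\sum_i h_{\gamma ii}=0$). In the $T\Sigma\oplus N\Sigma$ block decomposition of $\nabla^2\psi$ at $\exp_p(v)$, the mixed block is therefore $O(|v|^2)$ — precisely because $\Gamma^\gamma_{i\alpha}|_p=0$ — while the tangential block is $-2\sum_\gamma h_{\gamma ij}v^\gamma+O(|v|^2)$. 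Writing $\operatorname{tr}_n\nabla^2\psi$ as (the full $g$‑trace, i.e. $\Delta\psi$) minus (the sum of the top $m$ eigenvalues), and noting that the latter cluster near the positive value with eigenspace only $O(|v|^2)$‑close to $N_p\Sigma$, one gets $\operatorname{tr}_n\nabla^2\psi=\sum_{i,j}g^{ij}(\nabla^2\psi)_{ij}+O(|v|^3)$, the sum over tangential $i,j$.

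Now expand $\sum_{i,j}g^{ij}(\nabla^2\psi)_{ij}$ to second order. With $g^{ij}=\delta^{ij}+2\sum_\gamma h_{\gamma ij}v^\gamma+O(|v|^2)$ and $(\nabla^2\psi)_{ij}=-2\sum_\gamma\big(h_{\gamma ij}+\sum_\delta v^\delta\,\partial_{v^\delta}\Gamma^\gamma_{ij}|_p\big)v^\gamma+O(|v|^3)$, the linear term is $-2\sum_\gamma\big(\sum_i h_{\gamma ii}\big)v^\gamma=0$ by minimality, and the quadratic term is $-2\sum_i\big(\partial_{v^\delta}\Gamma^\gamma_{ii}|_p\big)v^\gamma v^\delta-4\sum_{i,j}h_{\gamma ij}h_{\delta ij}v^\gamma v^\delta$. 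To compute $\sum_i\partial_{v^\delta}\Gamma^\gamma_{ii}|_p$ use the Riemann‑tensor identity $R^\gamma{}_{i\delta j}=\partial_{v^\delta}\Gamma^\gamma_{ji}-\partial_{x^j}\Gamma^\gamma_{\delta i}+\Gamma^e_{ji}\Gamma^\gamma_{\delta e}-\Gamma^e_{\delta i}\Gamma^\gamma_{je}$, traced over $j=i$: at $p$ the term $\sum_i\Gamma^e_{ii}\Gamma^\gamma_{\delta e}$ vanishes by minimality (and $\Gamma^k_{ij}|_p=0$); the term $\sum_i\Gamma^e_{\delta i}\Gamma^\gamma_{ie}$ reduces to $\sum_{i,k}\Gamma^k_{\delta i}|_p\,\Gamma^\gamma_{ik}|_p=-\sum_{i,k}h_{\delta ik}h_{\gamma ik}$; and $\sum_i\partial_{x^i}\Gamma^\gamma_{\delta i}|_p$ is killed by the $(\gamma,\delta)$‑symmetrisation implicit in the contraction with $v^\gamma v^\delta$ (since the adapted frame is orthonormal along $\Sigma$, the full tangential $2$‑jet of $g_{\alpha\beta}$ vanishes at $p$). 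Collecting everything, and identifying $R^\gamma{}_{i\delta i}|_p$ with the ambient curvature $R_{i\gamma i\delta}$ via the symmetries of $R$, one obtains
\[
\operatorname{tr}_n\nabla^2\psi \;=\; 2\Big(-\sum_{i,\alpha,\beta}R_{i\alpha i\beta}v^\alpha v^\beta-\sum_{i,j,\alpha,\beta}h_{\alpha ij}h_{\beta ij}v^\alpha v^\beta\Big)+O(|v|^3)\,.
\]
By strong stability the bracket is $\ge c_0|v|^2=c_0\psi$, so $\operatorname{tr}_n\nabla^2\psi\ge 2c_0\psi+O(|v|^3)\ge c_0\psi$ on $U_{\eps_1}$ once $\eps_1$ is small; compactness of $\Sigma$ makes the choice of $\eps_1$ and all implied constants uniform in $p$, and one takes $c_1=c_0$.

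The main obstacle I anticipate is pinning down the second‑order Fermi‑coordinate expansion of $g$ about $\Sigma$ (equivalently, the Jacobi‑field/Riccati data of the tube) precisely enough to get the coefficients exactly right — in particular, to see that the $\Gamma\Gamma$ correction in $\partial_v\Gamma$ contributes a $+|h|^2$ that converts the naive $2|h|^2$ into the $|h|^2$ of the strong‑stability form, and that the leftover contracted $\partial\Gamma$‑term genuinely symmetrises to zero. A secondary but real point is the eigenvalue‑perturbation step: one must verify that the mixed $T\Sigma$–$N\Sigma$ block of $\nabla^2\psi$ is $O(|v|^2)$ — which is exactly why the normal frame is taken parallel at $p$ — so that, to the order needed, $\operatorname{tr}_n\nabla^2\psi$ is just the trace of the tangential block.
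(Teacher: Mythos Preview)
The paper you are comparing against does not contain a proof of this proposition: it is quoted verbatim from Tsai--Wang \cite{TsaiWang17} (as Proposition~4.1 there) and is used as a black box. So there is no ``paper's own proof'' to compare your argument to here.

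That said, your approach is the natural one and is essentially what Tsai--Wang carry out in their paper: a Fermi-coordinate expansion of $\nabla^2\psi$ about $\Sigma$, with the zeroth-order term vanishing trivially, the first-order term vanishing by minimality, and the second-order term identified with twice the strong-stability quadratic form $-R_{i\alpha i\beta}v^\alpha v^\beta - h_{\alpha ij}h_{\beta ij}v^\alpha v^\beta$. Your identification of the two delicate points --- the exact bookkeeping in $\partial_{v^\delta}\Gamma^\gamma_{ii}$ via the curvature formula (so that the $|h|^2$ coefficient comes out right), and the eigenvalue-perturbation argument reducing $\mathrm{tr}_n\nabla^2\psi$ to the tangential block trace modulo $O(|v|^3)$ --- is accurate; both are handled in \cite{TsaiWang17} by the same mechanisms you describe. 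If you want to verify your computation line by line, the reference to consult is the proof of Proposition~4.1 in \cite{TsaiWang17}, not the present paper.
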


We now show how Proposition \ref{thm:distest} together with White's barrier theorem, Theorem \ref{thm:barrier}, yields the proof of Theorem \ref{thm:mainthm}.

Let $\Sigma^n \subset (M,g)$ be a compact, oriented minimal submanifold which is strongly stable and consider the tubular neighbourhood $U=U_{\eps_1}$ given by Proposition \ref{thm:distest}. Let $\{\mu_t\}_{t\geq 0}$ be an integral $n$-Brakke flow in $(M,g)$ such that $\spt \mu_0 \subset U$. Recall $c_1>0$ given by Proposition \ref{thm:distest} and consider for any $\eps > 0$ the function
\begin{equation}\label{eq.0}
u(p,t) = e^{c_1 t} \psi - \eps t \ .
\end{equation}
Then we see that
$$\frac{\partial u}{\partial t} - \text{tr}_n \nabla^2 u \leq - \eps < 0\, ,$$
and thus by Theorem \ref{thm:barrier}   that
$$ u(x,t) \leq \eps_1^2$$
on $\spt \mu_t$. Letting $\eps \ra 0$ this implies that 
$$ \psi \leq e^{-c_1t} \eps_1^2$$
on $\spt \mu_t$ and thus 
\begin{equation}\label{eq.1}
\spt \mu_t \subset U_{e^{-c_1 t/2} \eps_1}\, .
\end{equation}
\vspace{1ex}
\begin{proof}[Proof of Theorem \ref{thm:mainthm}]
Continuing to use the notation above, by making $\eps_1$ smaller if necessary we can assume that $[\Sigma]_U \neq 0$,
 where $[\Sigma]_U$ denotes the homology class of $\Sigma$ in $U$ with respect to integral currents. This implies that the infimum of $\bfM[S]$, where $S$ represents the same homology class as $\Sigma$ in $U$, is positive, i.e.
\begin{equation}\label{eq.3}
\delta:=\inf_{S \in [\Sigma]_U} \bfM[S] >0\ .
\end{equation}
Consider now an enhanced Brakke flow $(\{\mu_t\}_{t\geq 0}, T)$ starting at $\Gamma$ such that $\spt \mu_0 \subset U$. By \eqref{eq.1} and by Theorem \ref{thm:enhanced-flow} $(iii)$  we have that
 $$\spt \mu_{\pi_\#(T_t)}  \subset U_{e^{-c_1 t/2} \eps_1 }$$
 and thus $\spt T \subset U \times [0,\infty)$. Since $\partial (T_{[0,t]}) = \Gamma - T_t$ we obain
 $$ \partial \pi_\#(T_{[0,t]}) = \Gamma - \pi_\#(T_t) $$
 and thus $\pi_\#(T_t) \in [\Sigma]_U$ for all $t\geq 0$. By \eqref{eq.3} and Theorem \ref{thm:enhanced-flow} $(iii)$ we obtain
 \begin{equation}\label{eq.4}
 \mu_t(M)\geq \bfM(\pi_\#(T_t))  \geq \delta >0 
 \end{equation}
 for all $t\geq 0$, and thus the flow is non-vanishing. Observe that the definition of  Brakke flow implies that that for any $0\leq t_1 \leq t_2$ one has the estimate
  \begin{equation}\label{eq.5}
\int_{t_1}^{t_2} \int |\bfH|^2\, d\mu_t\, dt \leq \mu_{t_1}(M) - \mu_{t_2}(M)\, ,
 \end{equation}
 where $\bfH$ is the mean curvature vector.  Combining this with \eqref{eq.4} implies that for any sequence $t_i \ra \infty$ there is a subsequence  $t'_i \ra \infty$ such that the flows $\{\mu_{t+t'_i}\}_{-t_i\leq t <\infty}$ converge to a non-vanishing Brakke flow $\{\bar{\mu}_t\}_{t\in \R}$. By \eqref{eq.1} we have $\spt \bar{\mu}_t \subset \Sigma$ for all $t \in \R$ and by \eqref{eq.5} we have that $\bar{\mu}_t$ is the mass measure of a stationary varifold for almost all $t\in \R$. Thus by the constancy theorem, see for example \cite{Simon}, we have for any such $t$ that 
$$ \bar{\mu}_t = \theta \,\mathcal{H}^n\res \Sigma $$
for some constant multiplicity $\theta \in\N$.  By assumption we have $\bfM [\Gamma] < 2 |\Sigma|$ and thus the monotonicity of total measure for Brakke flows implies that the multiplicity $\theta$ has to be one. Thus $\{\bar{\mu}_t\}_{t \in \R}$ is the static Brakke flow corresponding to $\Sigma$. Brakke's regularity theorem, see \cite{Brakke} or \cite{Tonegawa14a, Tonegawa14b}, now implies that the convergence is smooth. This implies that as $t\ra \infty$ the Brakke flow $\{\mu_t\}_{t\geq 0}$ converges smoothly to $\Sigma$. %This proves the statement of Theorem \ref{thm:mainthm}.
\end{proof}
\vspace{1ex}
\begin{proof}[Proof of Theorem \ref{thm:mainthm.2}]
One can use Proposition \ref{thm:distest} and the first variation formula for stationary varifolds to deduce Theorem \ref{thm:mainthm.2}. For convenience we use Theorem \ref{thm:mainthm}. We choose $U = U_{\eps_1}$ as above. Assume $\Gamma^{n+k}$ is a stationary integral varifold with $\spt \Gamma \subset U$. Note first that the barrier \eqref{eq.0} works for all Brakke flows of dimension $n+k \geq n$. We can thus treat $\Gamma^{n+k}$ as a stationary Brakke flow. The proof of Theorem \ref{thm:mainthm} yields that $\spt \Gamma \subset \Sigma$. Thus $k = 0$ and even more $\Gamma$ is the varifold associated to $\Sigma$ up to a constant multiplicity.
\end{proof}

\section{Applications}\label{sec:applications}

\subsection{Singularities of Lagrangian mean curvature flow}
Consider a compact special Lagrangian  $L$ in a Calabi--Yau manifold.  Suppose that $L$ is strongly stable.  For example, we could assume $L$ has positive Ricci curvature, such as the zero section in $T^*S^n$ with the Stenzel metric \cite{Stenzel}, since $L$ is then strongly stable by \cite[Proposition A]{TsaiWang17}: this is a consequence of the Gauss equation and the special Lagrangian condition, which in particular imposes symmetries on the second fundamental form of $L$.  It is worth noting that, by the work of Hein--Sun  \cite{HeinSun}, special Lagrangian $n$-spheres with positive Ricci curvature are now known to exist in certain compact Calabi--Yau $n$-folds.

Using the work of Neves in \cite{NevesFTS} we may construct a Lagrangian $L'$ Hamiltonian isotopic to $L$ which is arbitrarily $C^0$ close to $L$ but Lagrangian mean curvature flow $L'_t$ starting at $L'$ will develop a finite-time singularity.  Thus, $L'$ cannot satisfy the conditions of Tsai--Wang's result, Theorem \ref{thm:tsaiwang2}. 

However, we can choose $L'$ so that $\bfM[L']<2|L|$, and $L'$ is homologous to $L$ since it is Hamiltonian isotopic to $L$.  Moreover, we can ensure that $L'$ lies in the tubular neighbourhood $U$ provided by Theorem \ref{thm:mainthm}, as $L'$ is $C^0$ close to $L$.  Hence, applying Theorem \ref{thm:mainthm} gives that the enhanced Brakke flow starting at $L'$ exists for all time and converges smoothly to $L$.  

For all times before the first singular time of $L'_t$, the enhanced Brakke flow will agree with $L'_t$.  Hence the enhanced Brakke flow enables us to flow through the singularity of $L'_t$ and still converge smoothly to the special Lagrangian $L$.  

It would be useful to study this situation further, to see if this sheds light on the problem of long-time existence and converge of Lagrangian mean curvature flow.

\subsection{Non-compact manifolds with special holonomy}

There are several well-known examples of manifolds $M$ with complete Ricci-flat metrics with special holonomy and maximal volume growth, which have the structure of a vector bundle over a compact base:
\begin{itemize}
\item $T^*S^n$ $(n\geq 2)$ and $T^*\mathbb{CP}^n$ (Calabi--Yau, i.e.~holonomy $\mathrm{SU}(n)$, metrics \cite{Calabi,Stenzel});  
\item $\Lambda^2_-T^*S^4$, $\Lambda^2_-T^*\mathbb{CP}^2$ and the spinor bundle of $S^3$ %, $\mathbb{S}(S^3)$ 
(holonomy $\mathrm{G}_2$ metrics \cite{BryantSalamon}); 
\item the negative spinor bundle of $S^4$ %, $\mathbb{S}_-(S^4)$, 
(holonomy $\mathrm{Spin(7)}$ metric \cite{BryantSalamon}).
\end{itemize}
In each case the zero section $\Sigma^n$ of the bundle is volume-minimizing (since it is calibrated) and strongly stable by \cite[Proposition A]{TsaiWang17}.  Moreover, the 
squared distance function to $\Sigma$ is strictly convex away from $0$ \cite{TsaiWang16}, so we can take $U=M$ in our Theorems \ref{thm:mainthm.2}--\ref{thm:mainthm} in all of these cases.

We deduce that we get global uniqueness of $\Sigma$ amongst stationary integral varifolds in $M$ with support of dimension at least $n$, up to multiplicity, and long-time smooth convergence to $\Sigma$ of an enhanced Brakke flow starting at any $\Gamma\in[\Sigma]$ with mass strictly less than twice the volume of $\Sigma$.  

Notice in particular in the Calabi--Yau cases that we do not have to start with a Lagrangian and yet we still get convergence of an enhanced Brakke flow to the special Lagrangian base.  As the results of Neves indicate \cite{NevesFTS}, one expects singularities to develop along the flow, even starting with a smooth Lagrangian initial condition, and so the enhanced Brakke flow gives a flow through singularities in these cases.  We would similarly expect the mean curvature flow in the $\mathrm{G}_2$ and $\mathrm{Spin}(7)$ cases to develop singularities in general, yet we can still obtain a flow through singularities to the volume-minimising base.

\begin{appendix}

\section{Avoidance principle in higher codimension.}

%{\bf Avoidance principle in higher codimension.} 
We recall White's barrier theorem for mean curvature flow, see \cite[Theorem 14.1]{White_mcfnotes}. We include the proof for completeness.

\begin{theorem}[White]\label{thm:barrier}
 Suppose $\cm$ is the space-time support of an $n$-dimensional integral Brakke flow $\{\mu_t\}_{t\in I}$ in $\Omega \subset M$. Let $u:\Omega \times \R \ra \R$ be a smooth function, so that at $(x_0,t_0)$,
 $$\frac{\partial u}{\partial t} < \text{\emph{tr}}_n \nabla^2u\, ,$$
 where $\nabla^2u$ is the spatial ambient Hessian, and $\text{\emph{tr}}_n$ is the sum of the smallest $n$ eigenvalues. Then
 $$ u\big|_{\cm \cap \{t\leq t_0\}}$$
 cannot have a local maximum at $(x_0,t_0)$.
\end{theorem}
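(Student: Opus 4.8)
The plan is to prove White's barrier theorem, Theorem \ref{thm:barrier}, by contradiction: assume $u$ restricted to $\cm \cap \{t \leq t_0\}$ has a local maximum at $(x_0, t_0)$, and derive a violation of the Brakke inequality \eqref{brakkeflow}. The first step is to localise and normalise. After adding a constant we may assume $u(x_0, t_0) = 0$, and since the strict inequality $\partial_t u < \mathrm{tr}_n \nabla^2 u$ is an open condition, it persists in a space-time neighbourhood of $(x_0, t_0)$; on that neighbourhood $u < 0$ on $\cm \cap \{t \leq t_0\}$ except at $(x_0,t_0)$ itself. The idea is then to test the Brakke inequality with a suitable cutoff of $\varphi = f(u)$ for a convex increasing function $f$ (for instance $f(s) = e^{\lambda s}$ with $\lambda$ large, or a quadratic), chosen so that $\varphi$ is a legitimate nonnegative $C^2$ test function compactly supported near $x_0$, using an additional spatial cutoff to kill $\varphi$ away from the maximum point.

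The heart of the matter is the computation of the right-hand side of \eqref{brakkeflow} for $\varphi = f(u)$. Along an $n$-dimensional varifold with weak mean curvature $\bfH$, one has the first-variation/Brakke-type identity that, for test functions, $\int (-\varphi |\bfH|^2 + \langle \nabla \varphi, \bfH\rangle)\, d\mu_t$ can be compared with $-\int (\mathrm{div}_{T_x\mu_t} \nabla \varphi)\, d\mu_t$ up to the sign-definite term $-\int \varphi|\bfH + (\nabla\varphi)^\perp/\varphi|^2$ (completing the square); the point is that $\mathrm{div}_{S} \nabla \varphi$ for an $n$-plane $S$ is the sum of $n$ eigenvalues of $\nabla^2 \varphi$ in directions spanned by $S$, hence at least $\mathrm{tr}_n \nabla^2 \varphi$. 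Since $\nabla^2 \varphi = f''(u)\, du \otimes du + f'(u) \nabla^2 u$ and $f'' \geq 0$, $f' > 0$, one gets $\mathrm{tr}_n \nabla^2 \varphi \geq f'(u)\, \mathrm{tr}_n \nabla^2 u + f''(u)\,|\text{(spatial gradient part)}|^2 \geq f'(u)\, \mathrm{tr}_n \nabla^2 u$ after dropping a nonnegative term (or keeping it to absorb the completed-square error — this is where choosing $f$ convex enough matters). Putting this together yields $\bar D_t \mu_t(\varphi) \leq -\int f'(u)(\mathrm{tr}_n \nabla^2 u - \partial_t u)\, d\mu_t + (\text{boundary/cutoff terms})$, and by the strict differential inequality near $(x_0,t_0)$ the integrand is bounded below by a positive constant times $\mu_t$-measure near $x_0$.

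The final step is to extract the contradiction. Since $(x_0, t_0)$ is a local maximum of $u$ on $\cm \cap \{t \leq t_0\}$ with $u(x_0,t_0) = 0$, the function $t \mapsto \mu_t(\varphi)$ (with the extra cutoff arranged so all contributions come from the neighbourhood where $u \leq 0$) satisfies $\mu_{t_0}(\varphi) > 0$ while $\mu_t(\varphi)$ stays bounded — more precisely one runs the differential inequality $\bar D_t \mu_t(\varphi) \leq -c\, \mu_t(\{\varphi \geq \varphi(x_0,t_0)/2\}) + (\text{small})$ backward from $t_0$ and shows $\mu_t(\varphi)$ would have to have been strictly larger just before $t_0$, contradicting that $(x_0,t_0)$ is where $u$, hence $\varphi$ along the flow, is maximal; alternatively, one shows directly that $x_0 \in \spt \mu_{t_0}$ forces, via the monotonicity coming from the inequality, $\spt \mu_t$ to meet the region $\{u > 0\}$ for $t$ slightly less than $t_0$, contradicting the maximum.

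The main obstacle I expect is the careful handling of the cutoff and the regularity/integrability of $\bfH$: the Brakke inequality only gives useful information when $\mu_t$ carries an $L^2$ weak mean curvature, and one must ensure the test function $\varphi = f(u)\chi$ (with $\chi$ a spatial cutoff) is genuinely $C^2_c$ and nonnegative while the cutoff-derivative terms $\langle \nabla \chi, \bfH\rangle$ and $\nabla^2\chi$ terms are controlled — these are supported where $u < 0$ strictly, so they contribute a bounded error that does not interfere with the positive principal term, but making this quantitative (choosing the size of the neighbourhood, then $\lambda$ or the convexity of $f$, then the cutoff) requires some care. The completion-of-the-square step, which converts $-\varphi|\bfH|^2 + \langle\nabla\varphi,\bfH\rangle$ into $-\varphi|\bfH + \nabla\varphi/\varphi|^2 + |\nabla\varphi|^2/\varphi$ and hence requires $f'(u)^2/f(u)$ to be controlled, is the reason one takes $f$ exponential (so this ratio is just $\lambda^2 f$, harmless) rather than merely quadratic.
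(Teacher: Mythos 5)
Your proposal is correct in outline and follows essentially the same route as the paper: argue by contradiction, insert a nonnegative convex increasing function of $u$, localised near the maximum, into the Brakke inequality, convert $\langle\nabla\phi,\bfH\rangle$ into $-\mathrm{div}_{\cm}\nabla\phi$ via the first variation identity, drop $-\phi|\bfH|^2\leq 0$, use $\mathrm{div}_S\nabla\phi=\mathrm{tr}_S\nabla^2\phi\geq \mathrm{tr}_n\nabla^2\phi\geq f'(u)\,\mathrm{tr}_n\nabla^2u$, and integrate over a small parabolic cylinder. The paper's execution differs only in the choice of test function, and in a way you should adopt: after making the maximum strict (replace $u$ by $u-d(x,x_0)^4-|t_0-t|^2$) and adding a constant so that $u<0$ on $\cm$ along the parabolic boundary of the cylinder, it uses $(u^+)^4$, which vanishes identically there along the flow; this removes the separate spatial cutoff $\chi$ and hence all the $\langle\nabla\chi,\bfH\rangle$ and $\nabla^2\chi$ error terms whose control you (rightly) flag as the delicate point of your exponential-plus-cutoff version. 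One genuine caution: your parenthetical completing-the-square alternative, bounding $-\phi|\bfH|^2+\langle\nabla\phi,\bfH\rangle$ by $|\nabla\phi|^2/(4\phi)$ and hoping convexity of $f$ absorbs it, does not work in codimension $m>1$, because $\bfH$ is normal so the completed square produces the normal part of $\nabla u$, while the convexity surplus in $\mathrm{tr}_S\nabla^2\phi$ only yields $f''(u)|\nabla^{\top}u|^2$ with the tangential part; no choice of $f$ fixes this, so stick with the first-variation route you also describe. Finally, note that both your backward-in-time argument and the paper's concluding strict inequality rest on the same point, namely that $(x_0,t_0)\in\cm$ with $u(x_0,t_0)>0$ forces a positive amount of $\mu_t$-mass in $\{u>0\}$ for a set of times $t<t_0$ of positive measure; your plan at least makes this step explicit.
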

\begin{proof} Suppose otherwise, for a contradiction. We may assume  $\cm = \cm \cap \{t\leq t_0\}$ and that $u|_\cm$ has a strict local maximum at $(x_0,t_0)$.  (Otherwise we could replace $u$ by $u- d(x,x_0)^4 - |t_0-t|^2$).

Let $P(r) = B_r(x_0) \times (t_0-r^2,t_0]$. Choose $r>0$ small enough so that $t_0-r^2$ is past the initial time of the flow, $r$ is smaller than the injectivity radius at $x_0$,  $u|_{\cm\cap \overline{P(r)}} $ has a maximum at $(x_0,t_0)$ and nowhere else and $\tfrac{\partial u}{\partial t} < \text{tr}_n \nabla^2u$ on $\overline{P(r)}$. By adding a constant we can furthermore assume that $u_{\cm \cap (\bar{P}\setminus P)}<0< u(x_0,t_0)$. We let $u^+:= \max\{u,0\}$ and insert $(u^+)^4$ into the definition of Brakke flow. Thus
\begin{equation*}
 \begin{split}
 0&\leq \int_{B_r} (u^+)^4 \, d\mu_{t_0} = \int_{B_r} (u^+)^4 \, d\mu_{t_0} - \int_{B_r} (u^+)^4 \, d\mu_{t_0-r^2}\\
 &\leq \int_{t_0-r^2}^{t_0} \int \bigg(\frac{\partial}{\partial t} (u^+)^4 + \langle \mathbf{H}, \nabla(u^+)^4\rangle - |\mathbf{H}|^2 (u^+)^4 \bigg) \, d\mu_t dt\\
 &\leq \int_{t_0-r^2}^{t_0} \int \bigg(\frac{\partial}{\partial t} (u^+)^4 - \text{div}_{\cm}\big(\nabla(u^+)^4\big) \bigg) \, d\mu_t dt\\
 &= \int_{t_0-r^2}^{t_0} \int 4 \bigg((u^+)^3\frac{\partial}{\partial t} u^+-3 (u^+)^2 |\nabla^\cm u^+|^2 - (u^+)^3\text{div}_{\cm}\big(\nabla(u^+)\big) \bigg) \, d\mu_t dt\\
 &\leq  \int_{t_0-r^2}^{t_0} \int 4 (u^+)^3\bigg(\frac{\partial}{\partial t} u^+ - \text{tr}_{n} \nabla^2u^+ \bigg) \, d\mu_t dt < 0\, ,
\end{split}
\end{equation*}
which is a contradiction.
\end{proof}

\end{appendix}

\providecommand{\bysame}{\leavevmode\hbox to3em{\hrulefill}\thinspace}
\providecommand{\MR}{\relax\ifhmode\unskip\space\fi MR }
% \MRhref is called by the amsart/book/proc definition of \MR.
\providecommand{\MRhref}[2]{%
  \href{http://www.ams.org/mathscinet-getitem?mr=#1}{#2}
}
\providecommand{\href}[2]{#2}


\begin{thebibliography}{10}

\bibitem{Brakke}
Kenneth Brakke, \emph{The motion of a surface by its mean curvature}, Princeton
  Univ. Press, 1978.

\bibitem{BryantSalamon}
Robert~L. Bryant and Simon~M. Salamon, \emph{On the construction of some
  complete metrics with exceptional holonomy}, Duke Math. J. \textbf{58}
  (1989), no.~3, 829--850.

\bibitem{Calabi}
E.~Calabi, \emph{M\'etriques k\"ahl\'eriennes et fibr\'es holomorphes}, Ann.
  Sci. \'Ecole Norm. Sup. (4) \textbf{12} (1979), no.~2, 269--294.

\bibitem{HeinSun}
Hans-Joachim Hein and Song Sun, \emph{Calabi--{Y}au manifolds with isolated
  conical singularities}, 2016, {\tt arXiv:1607.02940.}

\bibitem{Ilmanen}
Tom Ilmanen, \emph{Elliptic regularization and partial regularity for motion by
  mean curvature}, Mem. Amer. Math. Soc. \textbf{108} (1994), no.~520, x+90.

\bibitem{Tonegawa14a}
Kota Kasai and Yoshihiro Tonegawa, \emph{A general regularity theory for weak
  mean curvature flow}, Calc. Var. Partial Differential Equations \textbf{50}
  (2014), no.~1-2, 1--68.

\bibitem{NevesFTS}
Andr\'e Neves, \emph{Finite time singularities for {L}agrangian mean curvature
  flow}, Ann. of Math. (2) \textbf{177} (2013), no.~3, 1029--1076.

\bibitem{Simon}
Leon Simon, \emph{Lectures on geometric measure theory}, Centre for
  Mathematical Analysis, Australian National Unversity, 1983.

\bibitem{Stenzel}
Matthew~B. Stenzel, \emph{Ricci-flat metrics on the complexification of a
  compact rank one symmetric space}, Manuscripta Math. \textbf{80} (1993),
  no.~2, 151--163.

\bibitem{Tonegawa14b}
Yoshihiro Tonegawa, \emph{A second derivative {H}\"older estimate for weak mean
  curvature flow}, Adv. Calc. Var. \textbf{7} (2014), no.~1, 91--138.

\bibitem{TsaiWang16}
Chung-Jun Tsai and Mu-Tao Wang, \emph{The stability of the mean curvature flow
  in manifolds of special holonomy}, 2016, to appear in J. Differential Geom.,
  available at {\tt arXiv:1605.03645.}

\bibitem{TsaiWang17}
\bysame, \emph{A strong stability condition on minimal submanifolds and its
  implications}, 2017, {\tt arXiv:1710.00433}.

\bibitem{White_mcfnotes}
Brian White, \emph{Topics in mean curvature flow}, 2015, notes by O.~Chodosh,
  available at \url{https://web.math.princeton.edu/~ochodosh/notes.html}.

\end{thebibliography}
\end{document}